\newtheorem{thm}{Theorem}[section]
\newtheorem{lem}[thm]{Lemma}
\theoremstyle{definition}
\theoremstyle{remark}
\theoremstyle{plain}
\theoremstyle{remark}
\newtheorem*{example}{Example}
\numberwithin{equation}{section}
\begin{document}

\title{ Enumeration of some particular   sextuple  Persymmetric  Matrices over $\mathbb{F}_{2} $ by rank}
\author{Jorgen~Cherly}
\address{D\'epartement de Math\'ematiques, Universit\'e de
    Brest, 29238 Brest cedex~3, France}
\email{Jorgen.Cherly@univ-brest.fr}
\email{andersen69@wanadoo.fr}

\maketitle 
\begin{abstract}
Dans cet article nous comptons le nombre de certaines  sextuples  matrices persym\' etriques de rang i sur $ \mathbb {F} _ {2} . $

 \end{abstract}

\selectlanguage{english}

\begin{abstract}
In this paper we count the number of some particular  sextuple  persymmetric rank i matrices over  $ \mathbb{F}_{2}.$
 \end{abstract}
 
  \maketitle 
\newpage
\tableofcontents
\newpage

  \allowdisplaybreaks

   \section{Introduction}
  \label{sec 1}  
  In this paper we propose to compute in the most simple case  the number of  sextuple persymmetric 
  matrices with entries in   $ \mathbb{F}_{2}$ of rank i\\
  That is to compute the number  $ \Gamma_{i}^{\left[2\atop{2 \atop{ 2\atop {2\atop{2\atop2}}}}\right]\times k}$ 
  of  sextuple  persymmetric matrices in   $ \mathbb{F}_{2}$ of rank i  $(0\leqslant i\leqslant\inf(12,k) )$ of the below form.\\
    \begin{equation}
    \label{eq 1.1}
   \left (  \begin{array} {ccccccc}
\alpha  _{1}^{(1)} & \alpha  _{2}^{(1)}  &   \alpha_{3}^{(1)} &   \alpha_{4}^{(1)} &   \alpha_{5}^{(1)}  & \ldots  &  \alpha_{k}^{(1)} \\
\alpha  _{2}^{(1)} & \alpha  _{3}^{(1)}  &   \alpha_{4}^{(1)} &   \alpha_{5}^{(1)} &   \alpha_{6}^{(1)}  & \ldots  &  \alpha_{k+1}^{(1)} \\ 
\hline \\
\alpha  _{1}^{(2)} & \alpha  _{2}^{(2)}  &   \alpha_{3}^{(2)} &   \alpha_{4}^{(2)} &   \alpha_{5}^{(2)} & \ldots   &  \alpha_{k}^{(2)} \\
\alpha  _{2}^{(2)} & \alpha  _{3}^{(2)}  &   \alpha_{4}^{(2)} &   \alpha_{5}^{(2)}&   \alpha_{6}^{(2)}   & \ldots  &  \alpha_{k+1}^{(2)} \\ 
\hline\\
\alpha  _{1}^{(3)} & \alpha  _{2}^{(3)}  &   \alpha_{3}^{(3)}  &   \alpha_{4}^{(3)} &   \alpha_{5}^{(3)}  & \ldots  &  \alpha_{k}^{(3)} \\
\alpha  _{2}^{(3)} & \alpha  _{3}^{(3)}  &   \alpha_{4}^{(3)}&   \alpha_{5}^{(3)} &   \alpha_{6}^{(3)}  & \ldots  &  \alpha_{k+1}^{(3)} \\ 
\hline \\
\alpha  _{1}^{(4)} & \alpha  _{2}^{(4)}  &   \alpha_{3}^{(4)} &   \alpha_{4}^{(4)} &   \alpha_{5}^{(4)}  & \ldots  &  \alpha_{k}^{(4)} \\
\alpha  _{2}^{(4)} & \alpha  _{3}^{(4)}  &   \alpha_{4}^{(4)}&   \alpha_{5}^{(4)} &   \alpha_{6}^{(4)}  & \ldots  &  \alpha_{k+1}^{(4)} \\ 
\hline \\
\alpha  _{1}^{(5)} & \alpha  _{2}^{(5)}  &   \alpha_{3}^{(5)} &   \alpha_{4}^{(5)} &   \alpha_{5}^{(5)}  & \ldots  &  \alpha_{k}^{(5)} \\
\alpha  _{2}^{(5)} & \alpha  _{3}^{(5)}  &   \alpha_{4}^{(5)}&   \alpha_{5}^{(5)} &   \alpha_{6}^{(5)}  & \ldots  &  \alpha_{k+1}^{(5)}\\
\hline \\
\alpha  _{1}^{(6)} & \alpha  _{2}^{(6)}  &   \alpha_{3}^{(6)} &   \alpha_{4}^{(6)} &   \alpha_{5}^{(6)}  & \ldots  &  \alpha_{k}^{(6)} \\
\alpha  _{2}^{(6)} & \alpha  _{3}^{(6)}  &   \alpha_{4}^{(6)}&   \alpha_{5}^{(6)} &   \alpha_{6}^{(6)}  & \ldots  &  \alpha_{k+1}^{(6)}\\
\end{array} \right )  
\end{equation} 
We remark that this paper is just a generalization of  the results obtained in  the author's paper [13] concerning quintuple persymmetric matrices
in   $ \mathbb{F}_{2}.$

\section{Notations and Preliminaries}
\label{sec 2}
 \subsection{Some notations concerning the field of Laurent Series $ \mathbb{F}_{2}((T^{-1})) $ }
   We denote by $ \mathbb{F}_{2}\big(\big({T^{-1}}\big) \big)
 = \mathbb{K} $ the completion
 of the field $\mathbb{F}_{2}(T), $  the field of  rational fonctions over the
 finite field\; $\mathbb{F}_{2}$,\; for the  infinity  valuation \;
 $ \mathfrak{v}=\mathfrak{v}_{\infty }$ \;defined by \;
 $ \mathfrak{v}\big(\frac{A}{B}\big) = degB -degA $ \;
 for each pair (A,B) of non-zero polynomials.
 Then every element non-zero t in
  $\mathbb{F}_{2}\big(\big({\frac{1}{T}}\big) \big) $
 can be expanded in a unique way in a convergent Laurent series
                              $  t = \sum_{j= -\infty }^{-\mathfrak{v}(t)}t_{j}T^j
                                 \; where\; t_{j}\in \mathbb{F}_{2}. $\\
  We associate to the infinity valuation\; $\mathfrak{v}= \mathfrak{v}_{\infty }$
   the absolute value \; $\vert \cdot \vert_{\infty} $\; defined by \;
  \begin{equation*}
  \vert t \vert_{\infty} =  \vert t \vert = 2^{-\mathfrak{v}(t)}. \\
\end{equation*}
    We denote  E the  Character of the additive locally compact group
$  \mathbb{F}_{2}\big(\big({\frac{1}{T}}\big) \big) $ defined by \\
\begin{equation*}
 E\big( \sum_{j= -\infty }^{-\mathfrak{v}(t)}t_{j}T^j\big)= \begin{cases}
 1 & \text{if      }   t_{-1}= 0, \\
  -1 & \text{if      }   t_{-1}= 1.
    \end{cases}
\end{equation*}
  We denote $\mathbb{P}$ the valuation ideal in $ \mathbb{K},$ also denoted the unit interval of  $\mathbb{K},$ i.e.
  the open ball of radius 1 about 0 or, alternatively, the set of all Laurent series 
   $$ \sum_{i\geq 1}\alpha _{i}T^{-i}\quad (\alpha _{i}\in  \mathbb{F}_{2} ) $$ and, for every rational
    integer j,  we denote by $\mathbb{P}_{j} $
     the  ideal $\left\{t \in \mathbb{K}|\; \mathfrak{v}(t) > j \right\}. $
     The sets\; $ \mathbb{P}_{j}$\; are compact subgroups  of the additive
     locally compact group \; $ \mathbb{K}. $\\
      All $ t \in \mathbb{F}_{2}\Big(\Big(\frac{1}{T}\Big)\Big) $ may be written in a unique way as
$ t = [t] + \left\{t\right\}, $ \;  $  [t] \in \mathbb{F}_{2}[T] ,
 \; \left\{t\right\}\in \mathbb{P}  ( =\mathbb{P}_{0}). $\\
 We denote by dt the Haar measure on \; $ \mathbb{K} $\; chosen so that \\
  $$ \int_{\mathbb{P}}dt = 1. $$\\
  
  $$ Let \quad
  (t_{1},t_{2},\ldots,t_{n} )
 =  \big( \sum_{j=-\infty}^{-\nu(t_{1})}\alpha _{j}^{(1)}T^{j},  \sum_{j=-\infty}^{-\nu(t_{2})}\alpha _{j}^{(2)}T^{j} ,\ldots, \sum_{j=-\infty}^{-\nu(t_{n})}\alpha _{j}^{(n)}T^{j}\big) \in  \mathbb{K}^{n}. $$ 
 We denote $\psi  $  the  Character on  $(\mathbb{K}^n, +) $ defined by \\
 \begin{align*}
  \psi \big( \sum_{j=-\infty}^{-\nu(t_{1})}\alpha _{j}^{(1)}T^{j},  \sum_{j=-\infty}^{-\nu(t_{2})}\alpha _{j}^{(2)}T^{j} ,\ldots, \sum_{j=-\infty}^{-\nu(t_{n})}\alpha _{j}^{(n)}T^{j}\big) & = E \big( \sum_{j=-\infty}^{-\nu(t_{1})}\alpha _{j}^{(1)}T^{j}\big) \cdot E\big( \sum_{j=-\infty}^{-\nu(t_{2})}\alpha _{j}^{(2)}T^{j}\big)\cdots E\big(  \sum_{j=-\infty}^{-\nu(t_{n})}\alpha _{j}^{(n)}T^{j}\big) \\
  & = 
    \begin{cases}
 1 & \text{if      }     \alpha _{-1}^{(1)} +    \alpha _{-1}^{(2)}  + \ldots +   \alpha _{-1}^{(n)}   = 0 \\
  -1 & \text{if      }    \alpha _{-1}^{(1)} +    \alpha _{-1}^{(2)}  + \ldots +   \alpha _{-1}^{(n)}   =1                                                                                                                          
    \end{cases}
  \end{align*}

   \subsection{Some results concerning  n-times persymmetric matrices over  $ \mathbb{F}_{2}$}
 \label{subsec 2.2}
     $$ Set\quad
  (t_{1},t_{2},\ldots,t_{n} )
 =  \big( \sum_{i\geq 1}\alpha _{i}^{(1)}T^{-i}, \sum_{i \geq 1}\alpha  _{i}^{(2)}T^{-i},\sum_{i \geq 1}\alpha _{i}^{(3)}T^{-i},\ldots,\sum_{i \geq 1}\alpha _{i}^{(n)}T^{-i}   \big) \in  \mathbb{P}^{n}. $$

     Denote by $D^{\left[2 \atop{\vdots \atop 2}\right]\times k}(t_{1},t_{2},\ldots,t_{n} ) $
    
    the following $2n \times k $ \;  n-times  persymmetric  matrix  over the finite field  $\mathbb{F}_{2} $ 
    
  \begin{equation}
  \label{eq 2.1}
   \left (  \begin{array} {cccccccc}
\alpha  _{1}^{(1)} & \alpha  _{2}^{(1)}  &   \alpha_{3}^{(1)} &   \alpha_{4}^{(1)} &   \alpha_{5}^{(1)} &  \alpha_{6}^{(1)}  & \ldots  &  \alpha_{k}^{(1)} \\
\alpha  _{2}^{(1)} & \alpha  _{3}^{(1)}  &   \alpha_{4}^{(1)} &   \alpha_{5}^{(1)} &   \alpha_{6}^{(1)} &  \alpha_{7}^{(1)} & \ldots  &  \alpha_{k+1}^{(1)} \\ 
\hline \\
\alpha  _{1}^{(2)} & \alpha  _{2}^{(2)}  &   \alpha_{3}^{(2)} &   \alpha_{4}^{(2)} &   \alpha_{5}^{(2)} &  \alpha_{6}^{(2)} & \ldots   &  \alpha_{k}^{(2)} \\
\alpha  _{2}^{(2)} & \alpha  _{3}^{(2)}  &   \alpha_{4}^{(2)} &   \alpha_{5}^{(2)}&   \alpha_{6}^{(2)} &  \alpha_{7}^{(2)}  & \ldots  &  \alpha_{k+1}^{(2)} \\ 
\hline\\
\alpha  _{1}^{(3)} & \alpha  _{2}^{(3)}  &   \alpha_{3}^{(3)}  &   \alpha_{4}^{(3)} &   \alpha_{5}^{(3)} &  \alpha_{6}^{(3)} & \ldots  &  \alpha_{k}^{(3)} \\
\alpha  _{2}^{(3)} & \alpha  _{3}^{(3)}  &   \alpha_{4}^{(3)}&   \alpha_{5}^{(3)} &   \alpha_{6}^{(3)}  &  \alpha_{7}^{(3)} & \ldots  &  \alpha_{k+1}^{(3)} \\ 
\hline \\
\vdots & \vdots & \vdots  & \vdots  & \vdots & \vdots  & \vdots & \vdots \\
\hline \\
\alpha  _{1}^{(n)} & \alpha  _{2}^{(n)}  &   \alpha_{3}^{(n)} &   \alpha_{4}^{(n)} &   \alpha_{5}^{(n)}  &  \alpha_{6}^{(n)} & \ldots  &  \alpha_{k}^{(n)} \\
\alpha  _{2}^{(n)} & \alpha  _{3}^{(n)}  &   \alpha_{4}^{(n)}&   \alpha_{5}^{(n)} &   \alpha_{6}^{(n)}  &  \alpha_{7}^{(n)} & \ldots  &  \alpha_{k+1}^{(n)} \\ 
\end{array} \right )  
\end{equation} 
We denote by  $ \Gamma_{i}^{\left[2\atop{\vdots \atop 2}\right]\times k}$  the number of rank i n-times persymmetric matrices over $\mathbb{F}_{2}$ of the above form :  \\

  Let $ \displaystyle  f (t_{1},t_{2},\ldots,t_{n} ) $  be the exponential sum  in $ \mathbb{P}^{n} $ defined by\\
    $(t_{1},t_{2},\ldots,t_{n} ) \displaystyle\in \mathbb{P}^{n}\longrightarrow \\
    \sum_{deg Y\leq k-1}\sum_{deg U_{1}\leq  1}E(t_{1} YU_{1})
  \sum_{deg U_{2} \leq 1}E(t _{2} YU_{2}) \ldots \sum_{deg U_{n} \leq 1} E(t _{n} YU_{n}). $\vspace{0.5 cm}\\
    Then
  $$     f_{k} (t_{1},t_{2},\ldots,t_{n} ) =
  2^{2n+k- rank\big[ D^{\left[2\atop{\vdots \atop 2}\right]\times k}(t_{1},t_{2},\ldots,t_{n} )\big] } $$

    Hence  the number denoted by $ R_{q,n}^{(k)} $ of solutions \\
  
 $(Y_1,U_{1}^{(1)},U_{2}^{(1)}, \ldots,U_{n}^{(1)}, Y_2,U_{1}^{(2)},U_{2}^{(2)}, 
\ldots,U_{n}^{(2)},\ldots  Y_q,U_{1}^{(q)},U_{2}^{(q)}, \ldots,U_{n}^{(q)}   ) \in (\mathbb{F}_{2}[T])^{(n+1)q}$ \vspace{0.5 cm}\\
 of the polynomial equations  \vspace{0.5 cm}
  \[\left\{\begin{array}{c}
 Y_{1}U_{1}^{(1)} + Y_{2}U_{1}^{(2)} + \ldots  + Y_{q}U_{1}^{(q)} = 0  \\
    Y_{1}U_{2}^{(1)} + Y_{2}U_{2}^{(2)} + \ldots  + Y_{q}U_{2}^{(q)} = 0\\
    \vdots \\
   Y_{1}U_{n}^{(1)} + Y_{2}U_{n}^{(2)} + \ldots  + Y_{q}U_{n}^{(q)} = 0 
 \end{array}\right.\]
 
    $ \Leftrightarrow
    \begin{pmatrix}
   U_{1}^{(1)} & U_{1}^{(2)} & \ldots  & U_{1}^{(q)} \\ 
      U_{2}^{(1)} & U_{2}^{(2)}  & \ldots  & U_{2}^{(q)}  \\
\vdots &   \vdots & \vdots &   \vdots   \\
U_{n}^{(1)} & U_{n}^{(2)}   & \ldots  & U_{n}^{(q)} \\
 \end{pmatrix}  \begin{pmatrix}
   Y_{1} \\
   Y_{2}\\
   \vdots \\
   Y_{q} \\
  \end{pmatrix} =   \begin{pmatrix}
  0 \\
  0 \\
  \vdots \\
  0 
  \end{pmatrix} $\\
  satisfying the degree conditions \\
                   $$  degY_i \leq k-1 ,
                   \quad degU_{j}^{(i)} \leq 1, \quad  for \quad 1\leq j\leq n  \quad 1\leq i \leq q $$ \\
  is equal to the following integral over the unit interval in $ \mathbb{K}^{n} $
    $$ \int_{\mathbb{P}^{n}} f_{k}^{q}(t_{1},t_{2},\ldots,t_{n}) dt_{1}dt _{2}\ldots dt _{n}. $$
  Observing that $ f (t_{1},t_{2},\ldots,t_{n} ) $ is constant on cosets of $ \prod_{j=1}^{n}\mathbb{P}_{k+1} $ in $ \mathbb{P}^{n} $\;
  the above integral is equal to 
  
  \begin{equation}
  \label{eq 2.2}
 2^{q(2n+k) - (k+1)n}\sum_{i = 0}^{\inf(2n,k)}
  \Gamma_{i}^{\left[2\atop{\vdots \atop 2}\right]\times k} 2^{-iq} =  R_{q,n}^{(k)} 
 \end{equation}
 
 \begin{eqnarray}
 \label{eq 2.3}
\text{ Recall that $ R_{q,n}^{(k)}$ is equal to the number of solutions of the polynomial system} \nonumber \\
    \begin{pmatrix}
   U_{1}^{(1)} & U_{1}^{(2)} & \ldots  & U_{1}^{(q)} \\ 
      U_{2}^{(1)} & U_{2}^{(2)}  & \ldots  & U_{2}^{(q)}  \\
\vdots &   \vdots & \vdots &   \vdots   \\
U_{n}^{(1)} & U_{n}^{(2)}   & \ldots  & U_{n}^{(q)} \\
 \end{pmatrix}  \begin{pmatrix}
   Y_{1} \\
   Y_{2}\\
   \vdots \\
   Y_{q} \\
  \end{pmatrix} =   \begin{pmatrix}
  0 \\
  0 \\
  \vdots \\
  0 
  \end{pmatrix} \\
 \text{ satisfying the degree conditions}\nonumber \\
                     degY_i \leq k-1 ,
                   \quad degU_{j}^{(i)} \leq 1, \quad  for \quad 1\leq j\leq n  \quad 1\leq i \leq q  \nonumber
 \end{eqnarray}
  From \eqref{eq 2.2} we obtain for q = 1\\
   \begin{align}
  \label{eq 2.4}
 2^{k-(k-1)n}\sum_{i = 0}^{\inf(2n,k)}
 \Gamma_{i}^{\left[2\atop{\vdots \atop 2}\right]\times k} 2^{-i} =  R_{1,n}^{(k)} = 2^{2n}+2^k-1
  \end{align}
We have obviously \\
  \begin{align}
  \label{eq 2.5}
 \sum_{i = 0}^{k}
 \Gamma_{i}^{\left[2\atop{\vdots \atop 2}\right]\times k}  = 2^{(k+1)n}  
 \end{align}
From  the fact that the number of rank one persymmetric  matrices over $\mathbb{F}_{2}$ is equal to three  we obtain using
 combinatorial methods  : \\
   \begin{equation}
  \label{eq 2.6}
 \Gamma_{1}^{\left[2\atop{\vdots \atop 2}\right]\times k}  = (2^{n}-1)\cdot 3
  \end{equation}
  For more details see Cherly  [11,12].
  \subsection{The case n=6}
   \begin{eqnarray*}
       Set\quad
  (t_{1},t_{2},t_{3},t_{4},t_{5},t_{6} )\hspace{12 cm}\\
 =  \big( \sum_{i\geq 1}\alpha _{i}^{(1)}T^{-i}, \sum_{i \geq 1}\alpha  _{i}^{(2)}T^{-i},\sum_{i \geq 1}\alpha _{i}^{(3)}T^{-i},\sum_{i \geq 1}\alpha _{i}^{(4)}T^{-i},\sum_{i \geq 1}\alpha _{i}^{(5)}T^{-i}, \sum_{i \geq 1}\alpha _{i}^{(6)}T^{-i}  \big) \in  \mathbb{P}^{6}. 
  \end{eqnarray*}
  Denote by $D^{\left[2 \atop{2\atop{2 \atop {2\atop {2\atop 2}}}}\right]\times k}(t_{1},t_{2},t_{3},t_{4},t_{5},t_{6}) $
    
    the following $12 \times k $ \; sextuple  persymmetric  matrix  over the finite field  $\mathbb{F}_{2} $ 
   
  \begin{displaymath}
   \left (  \begin{array} {cccccccc}
\alpha  _{1}^{(1)} & \alpha  _{2}^{(1)}  &   \alpha_{3}^{(1)} &   \alpha_{4}^{(1)} &   \alpha_{5}^{(1)}   & \ldots  &  \alpha_{k}^{(1)} \\
\alpha  _{2}^{(1)} & \alpha  _{3}^{(1)}  &   \alpha_{4}^{(1)} &   \alpha_{5}^{(1)} &   \alpha_{6}^{(1)} & \ldots  &  \alpha_{k+1}^{(1)} \\ 
\hline \\
\alpha  _{1}^{(2)} & \alpha  _{2}^{(2)}  &   \alpha_{3}^{(2)} &   \alpha_{4}^{(2)} &   \alpha_{5}^{(2)} & \ldots   &  \alpha_{k}^{(2)} \\
\alpha  _{2}^{(2)} & \alpha  _{3}^{(2)}  &   \alpha_{4}^{(2)} &   \alpha_{5}^{(2)}&   \alpha_{6}^{(2)}   & \ldots  &  \alpha_{k+1}^{(2)} \\ 
\hline\\
\alpha  _{1}^{(3)} & \alpha  _{2}^{(3)}  &   \alpha_{3}^{(3)}  &   \alpha_{4}^{(3)} &   \alpha_{5}^{(3)} & \ldots  &  \alpha_{k}^{(3)} \\
\alpha  _{2}^{(3)} & \alpha  _{3}^{(3)}  &   \alpha_{4}^{(3)}&   \alpha_{5}^{(3)} &   \alpha_{6}^{(3)} & \ldots  &  \alpha_{k+1}^{(3)} \\ 
\hline \\
\alpha  _{1}^{(4)} & \alpha  _{2}^{(4)}  &   \alpha_{3}^{(4)} &   \alpha_{4}^{(4)} &   \alpha_{5}^{(4)}  & \ldots  &  \alpha_{k}^{(4)} \\
\alpha  _{2}^{(4)} & \alpha  _{3}^{(4)}  &   \alpha_{4}^{(4)}&   \alpha_{5}^{(4)} &   \alpha_{6}^{(4)}  & \ldots  &  \alpha_{k+1}^{(4)} \\ 
\hline \\
\alpha  _{1}^{(5)} & \alpha  _{2}^{(5)}  &   \alpha_{3}^{(5)} &   \alpha_{4}^{(5)} &   \alpha_{5}^{(5)} & \ldots  &  \alpha_{k}^{(5)} \\
\alpha  _{2}^{(5)} & \alpha  _{3}^{(5)}  &   \alpha_{4}^{(5)}&   \alpha_{5}^{(5)} &   \alpha_{6}^{(5)}  & \ldots  &  \alpha_{k+1}^{(5)} \\
\hline \\
\alpha  _{1}^{(6)} & \alpha  _{2}^{(6)}  &   \alpha_{3}^{(6)} &   \alpha_{4}^{(6)} &   \alpha_{5}^{(6)} & \ldots  &  \alpha_{k}^{(6)} \\
\alpha  _{2}^{(6)} & \alpha  _{3}^{(6)}  &   \alpha_{4}^{(6)}&   \alpha_{5}^{(6)} &   \alpha_{6}^{(6)}  & \ldots  &  \alpha_{k+1}^{(6)} \\
\end{array} \right )  
\end{displaymath} 
We denote by  $ \Gamma_{i}^{\left[2\atop{2 \atop{ 2\atop {2\atop {2\atop 2}}}}\right]\times k}$  the number of rank i sextuple persymmetric matrices over $\mathbb{F}_{2}$ of the above form :  \\

  Let $ \displaystyle  f (t_{1},t_{2},t_{3},t_{4},t_{5},t_{6} ) $  be the exponential sum  in $ \mathbb{P}^{6} $ defined by\\
    $(t_{1},t_{2},t_{3},t_{4},t_{5},t_{6}) \displaystyle\in \mathbb{P}^{6}\longrightarrow 
    \sum_{deg Y\leq k-1}\sum_{deg U_{1}\leq  1}E(t_{1} YU_{1})
  \sum_{deg U_{2} \leq 1}E(t _{2} YU_{2}) \sum_{deg U_{3} \leq 1}E(t _{3} YU_{3}) \\ 
  \sum_{deg U_{4} \leq 1} E(t _{4} YU_{4}) \sum_{deg U_{5} \leq 1} E(t _{5} YU_{5} \sum_{deg U_{6} \leq 1} E(t _{6} YU_{6}).$\\
 \vspace{0.5 cm}\\
    Then
  $$     f_{k} (t_{1},t_{2},t_{3},t_{4},t_{5},t_{6} ) =
  2^{12+k- rank\big[ D^{\left[2\atop{2\atop{2 \atop {2\atop {2\atop}}}}\right]\times k}(t_{1},t_{2},t_{3},t_{4},t_{5},t_{6} )\big] } $$

 Hence  the number denoted by $ R_{q,6}^{(k)} $ of solutions \\
  
 $(Y_1,U_{1}^{(1)},U_{2}^{(1)},U_{3}^{(1)} ,U_{4}^{(1)}, U_{5}^{(1)},U_{6}^{(1)},Y_2,U_{1}^{(2)},U_{2}^{(2)}, 
U_{3}^{(2)},U_{4}^{(2)},U_{5}^{(2)},U_{6}^{(2)}\\
\ldots  Y_q,U_{1}^{(q)},U_{2}^{(q)}, U_{3}^{(q)},U_{4}^{(q)},U_{5}^{(q)},U_{6}^{(q)}   ) \in (\mathbb{F}_{2}[T])^{7q}$ \vspace{0.5 cm}\\
 of the polynomial equations  \vspace{0.5 cm}
  \[\left\{\begin{array}{c}
 Y_{1}U_{1}^{(1)} + Y_{2}U_{1}^{(2)} + \ldots  + Y_{q}U_{1}^{(q)} = 0  \\
    Y_{1}U_{2}^{(1)} + Y_{2}U_{2}^{(2)} + \ldots  + Y_{q}U_{2}^{(q)} = 0\\
    Y_{1}U_{3}^{(1)} + Y_{3}U_{3}^{(2)} + \ldots  + Y_{q}U_{3}^{(q)} = 0\\ 
   Y_{1}U_{4}^{(1)} + Y_{2}U_{4}^{(2)} + \ldots  + Y_{q}U_{4}^{(q)} = 0 \\
  Y_{1}U_{5}^{(1)} + Y_{2}U_{5}^{(2)} + \ldots  + Y_{q}U_{5}^{(q)} = 0 \\
   Y_{1}U_{6}^{(1)} + Y_{2}U_{6}^{(2)} + \ldots  + Y_{q}U_{6}^{(q)} = 0 \\
  \end{array}\right.\]
 
    $ \Leftrightarrow
    \begin{pmatrix}
   U_{1}^{(1)} & U_{1}^{(2)} & \ldots  & U_{1}^{(q)} \\ 
      U_{2}^{(1)} & U_{2}^{(2)}  & \ldots  & U_{2}^{(q)}  \\
 U_{3}^{(1)} & U_{3}^{(2)}  & \ldots  & U_{3}^{(q)}  \\
U_{4}^{(1)} & U_{4}^{(2)}   & \ldots  & U_{4}^{(q)} \\
U_{5}^{(1)} & U_{5}^{(2)}   & \ldots  & U_{5}^{(q)}\\
U_{6}^{(1)} & U_{6}^{(2)}   & \ldots  & U_{6}^{(q)}
 \end{pmatrix}  \begin{pmatrix}
   Y_{1} \\
   Y_{2}\\
   \vdots \\
   Y_{q} \\
  \end{pmatrix} =   \begin{pmatrix}
  0 \\
  0 \\
  0 \\
  0 \\
  0\\
  0
  \end{pmatrix} $\\
  satisfying the degree conditions \\
                   $$  degY_i \leq k-1 ,
                   \quad degU_{j}^{(i)} \leq 1, \quad  for \quad 1\leq j\leq 6  \quad 1\leq i \leq q $$ \\
  is equal to the following integral over the unit interval in $ \mathbb{K}^{6} $
    $$ \int_{\mathbb{P}^{6}} f_{k}^{q}(t_{1},t_{2},t_{3},t_{4},t_{5},t_{6}) dt_{1}dt _{2}dt_{3} dt _{4}dt_{5}dt_{6}$$
  Observing that $ f (t_{1},t_{2},t_{3},t_{4},t_{5},t_{6} ) $ is constant on cosets of $ \prod_{j=1}^{6}\mathbb{P}_{k+1} $ in $ \mathbb{P}^{6} $\;
  the above integral is equal to 
   \begin{equation}
  \label{eq 2.7}
 2^{q(12+k) - 6(k+1)}\sum_{i = 0}^{\inf{(12,k)}}
  \Gamma_{i}^{\left[2\atop{2\atop{2 \atop {2\atop {2\atop2}}}}\right]\times k} 2^{-iq} =  R_{q,6}^{(k)} \quad \text{where} \; k\geqslant 1
 \end{equation}
   \subsection{Some preliminary results}
 \begin{lem}
\label{lem 2.1}
   \begin{equation}
  \label{eq 2.8}
  \begin{cases} 
 \displaystyle   \Gamma_{0}^{\left[2\atop{\vdots \atop 2}\right]\times k}    = 1 \quad \text{if} \quad  k\geqslant 1 \\
  \displaystyle    \Gamma_{1}^{\left[2\atop{\vdots \atop 2}\right]\times k}    = 3\cdot(2^n-1) \quad \text{if} \quad  k\geqslant 2 \\
 \displaystyle    \Gamma_{2}^{\left[2\atop{\vdots \atop 2}\right]\times k} = (2^{n+1}-2)\cdot2^{k}+7\cdot2^{2n}-25\cdot2^{n}+18 \quad \text{for} \quad k\geqslant 3\\
\displaystyle   \Gamma_{3}^{\left[2\atop{\vdots \atop 2}\right]\times k} = [7\cdot2^{2n}-21\cdot2^{n}+14]\cdot2^{k}+15\cdot2^{3n}-133\cdot2^{2n}+294\cdot2^{n}-176  \quad \text{for} \quad k\geqslant 4\\
\displaystyle    \Gamma_{4}^{\left[2\atop{\vdots \atop 2}\right]\times k} = \frac{1}{3}\cdot(2^{2n+1}-6\cdot2^{n}+4)\cdot2^{2k}\\+
\frac{1}{6}\cdot(105\cdot2^{3n}-783\cdot2^{2n}+1614\cdot2^{n}-936)\cdot2^{k}\\
+\frac{1}{6}\cdot(186\cdot2^{4n}-3630\cdot2^{3n}+19028\cdot2^{2n}-34464\cdot2^{n}+18880)
\quad \text{for} \quad k\geqslant 5\\
\displaystyle  \Gamma_{5}^{\left[2\atop{\vdots \atop 2}\right]\times k}  =
  \frac{1}{2}\cdot(5\cdot2^{3n}-35\cdot2^{2n}+70\cdot2^{n}-40)\cdot2^{2k}\\+
    \frac{1}{4}\cdot(155\cdot2^{4n}-2565\cdot2^{3n}+12530\cdot2^{2n}-21960\cdot2^{n}+11840)\cdot2^{k}\\
  + 63\cdot2^{5n}-2573\cdot2^{4n}+29150\cdot2^{3n}-123760\cdot2^{2n}+203872\cdot2^{n}-106752
 \quad \text{for} \quad k\geqslant 6\\
 \Gamma_{6}^{\left[2\atop{\vdots \atop 2}\right]\times k} = 
    \frac{1}{21}\cdot(2^{3n}-7\cdot2^{2n}+14\cdot2^{n}-8)\cdot2^{3k}\\
    + \frac{1}{168}\cdot(1085\cdot2^{4n}-16723\cdot2^{3n}+79086\cdot2^{2n}-136472\cdot2^{n}+73024)\cdot2^{2k}\\
     + \frac{1}{168}\cdot(13671\cdot2^{5n}-475881\cdot2^{4n}+5026378\cdot2^{3n}-20647816\cdot2^{2n}+33473216\cdot2^{n}-17389568)\cdot2^{k}\\
      + \frac{1}{168}\cdot(21336\cdot2^{6n}-1781640\cdot2^{5n}+41896624\cdot2^{4n}\\
      -382091648\cdot2^{3n}+1470524160\cdot2^{2n}-2311493632\cdot2^{n}+1182924800)   \quad \text{for} \quad k\geqslant 7\\   \\
   \displaystyle   \Gamma_{7}^{\left[2\atop{\vdots \atop 2}\right]\times k}  =  \frac{31}{168}\cdot [2^{4n}-15\cdot 2^{3n}+70 \cdot 2^{2n}-120\cdot 2^{n}+64] \cdot2^{3k}\\
+  \frac{1}{96}\cdot [ 1395\cdot2^{5n}-45229\cdot2^{4n}+462210\cdot 2^{3n}-1868680 \cdot 2^{2n}+3005760\cdot 2^{n}-1555456] \cdot2^{2k}\\
+  \frac{1}{48}\cdot [ 8001\cdot2^{6n}-571023\cdot2^{5n}+12524806\cdot2^{4n}-110524920\cdot 2^{3n}+418606144\cdot 2^{2n}\\
-652818432\cdot 2^{n}+332775424] \cdot2^{k}\\
+\frac{1}{21}\cdot [5355\cdot2^{7n}- 904113\cdot2^{6n}+43302294\cdot2^{5n}-817168432\cdot2^{4n}+6743660640\cdot 2^{3n}\\
-96649567\cdot2^{8}\cdot 2^{2n}+4637778\cdot2^{13}\cdot 2^{n}-293263\cdot2^{16}] \quad \text{for} \quad k\geqslant 8\\ 
\end{cases}
    \end{equation}
\end{lem}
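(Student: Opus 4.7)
The backbone of the argument is \eqref{eq 2.2}, which reads
$$R_{q,n}^{(k)}\;=\;2^{q(2n+k)-(k+1)n}\sum_{i=0}^{\inf(2n,k)} \Gamma_{i}^{\left[2\atop{\vdots\atop 2}\right]\times k}\,2^{-iq}$$
and, for each fixed $q\geqslant 1$, constitutes one linear relation in the unknowns $\Gamma_{i}^{\left[2\atop{\vdots\atop 2}\right]\times k}$. Combined with the total-count identity \eqref{eq 2.5} and with the base cases $\Gamma_0 = 1$ (the only rank-$0$ matrix is the zero matrix) and $\Gamma_1 = 3(2^n-1)$ from \eqref{eq 2.6}, these relations suffice in principle to determine every $\Gamma_i$, provided one can evaluate the left-hand side $R_{q,n}^{(k)}$ in closed form.

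The inductive scheme I would adopt proceeds on $i$. Assuming $\Gamma_0,\ldots,\Gamma_{i-1}$ are already known as explicit polynomials in $2^n$ and $2^k$, I would substitute them into \eqref{eq 2.2} for $q = 1,2,3,\ldots$ and eliminate the higher-rank unknowns $\Gamma_j$ with $j>i$ by a Vandermonde-style manipulation in the variable $2^{-q}$. A convenient shortcut is to work first at the smallest admissible $k$, where the sum truncates shortly after $\Gamma_i$, so as to extract the value of $\Gamma_i$ at that particular $k$; once that value is in hand, the formula is propagated to larger $k$ by repeated application of \eqref{eq 2.4} and \eqref{eq 2.5} and by matching the known shape (a polynomial in $2^k$ of a controlled degree) that emerges from \eqref{eq 2.2}.

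The real computational work lies in evaluating $R_{q,n}^{(k)}$ itself, i.e.\ in counting the $(n+1)q$-tuples over $\mathbb{F}_2[T]$ satisfying the bilinear system in \eqref{eq 2.3}. My plan is to stratify the count according to the degree profile of $(Y_1,\ldots,Y_q)$ subject to $\deg Y_\ell \leqslant k-1$; for each profile, the tuples $(U_j^{(1)},\ldots,U_j^{(q)})$ with $\deg U_j^{(\ell)}\leqslant 1$ satisfying $\sum_\ell Y_\ell U_j^{(\ell)} = 0$ form, for each index $j$ independently, the kernel of an $\mathbb{F}_2$-linear map whose rank is governed by a Sylvester-type matrix built from the $Y_\ell$. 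Raising the per-$j$ kernel size to the $n$-th power and summing over profiles then produces $R_{q,n}^{(k)}$ as an explicit polynomial in $2^n$ and $2^k$.

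The main obstacle will be the combinatorial explosion of this stratification as $q$ grows: to reach $\Gamma_7$ one needs $R_{q,n}^{(k)}$ for several values of $q$ beyond $q=1$, and the degree-profile case analysis fragments into many sub-cases according to coincidences among the $\deg Y_\ell$ and to the Euclidean interactions of the $Y_\ell$ with the degree-$\leqslant 1$ polynomials $U_j^{(\ell)}$. The rank computations must be tracked with rational coefficients (witness the $\tfrac{1}{168}$ denominators and six-digit numerators already visible in the stated formulas for $\Gamma_6$ and $\Gamma_7$), and a useful consistency check at the end is to verify that every rational expression does evaluate to a non-negative integer for all admissible $(n,k)$, and that specialisation at $n=6$ reproduces the sextuple-persymmetric counts needed for the subsequent sections.
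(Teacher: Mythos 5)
Your proposal should first be measured against what the paper actually does here: the paper does not derive \eqref{eq 2.8} at all, its ``proof'' is a citation of Lemma 3.3 of [15], where these formulas for $0\leqslant i\leqslant 7$ were established. So you are attempting the derivation itself, and the skeleton you choose --- the moment identity \eqref{eq 2.2}, the total count \eqref{eq 2.5}, the base cases \eqref{eq 2.6}, induction on $i$ --- is indeed the machinery this series of papers is built on. But as written your scheme does not close, and the place where it fails is precisely the place you defer.

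First, the elimination you describe needs closed-form values of $R_{q,n}^{(k)}$, uniformly in $n$ and $k$, for about as many values of $q$ as there are unknowns, i.e.\ on the order of $\inf(2n,k)$ values --- a number that grows with $n$. The only moments available in closed form are $q=1$ and $q=2$ (these are \eqref{eq 2.4} and the relations recalled in \eqref{eq 2.9}); computing $R_{q,n}^{(k)}$ for higher $q$ by stratifying over degree profiles of $(Y_1,\dots,Y_q)$ and counting kernels is exactly the hard core of the problem, and your sketch gives no indication of how to carry it out beyond acknowledging the ``combinatorial explosion.'' Second, the shortcut meant to avoid this does not supply enough equations. Once $\Gamma_0,\dots,\Gamma_{i-1}$ are known, the unknowns at width $k$ are $\Gamma_i,\dots,\Gamma_{\inf(2n,k)}$, and each increase of $k$ introduces a new one; the relations \eqref{eq 2.4}, \eqref{eq 2.5} and the $q=2$ line of \eqref{eq 2.9} therefore pin down $\Gamma_i$ only at the first few widths past $k=i$, whereas the asserted formula for, say, $\Gamma_7$ is a polynomial in $2^k$ with four unknown coefficients claimed for all $k\geqslant 8$. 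Two or three data points in $k$ cannot determine it, and the ``controlled polynomial shape in $2^k$'' you invoke does not emerge from \eqref{eq 2.2}: in this series it is either postulated separately (compare Lemmas \ref{lem 2.4} and \ref{lem 2.5} of the present paper) or obtained from recurrences in $k$ relating the width-$(k+1)$ counts to the width-$k$ counts by analysing the adjunction of a column. Without either that structural input or the higher moments, your propagation step is underdetermined, so the proposal as it stands has a genuine gap rather than being an alternative proof.
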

\begin{proof}
  We recall the result obtained in Lemma 3.3 [15] concerning the number of rank i n-times persymmetric matrices over $ \mathbb{F}_{2} $
  of the form \eqref{eq 2.1} for $ 0 \leqslant i \leqslant 7 $.
\end{proof}

\begin{lem}
\label{lem 2.2}
  \begin{equation}
 \label{eq 2.9}
  \begin{cases}  
\displaystyle  \sum_{i = 0}^{\inf (2n,k)} \Gamma_{i}^{\left[2\atop{\vdots \atop 2}\right]\times k}  = 2^{(k+1)n}, \\ 
  \displaystyle  \sum_{i = 0}^{\inf (2n,k)} \Gamma_{i}^{\left[2\atop{\vdots \atop 2}\right]\times k} 2^{-i}  = 2^{n+k(n-1)}+2^{(k-1)n}-2^{(k-1)n-k},\\
  \displaystyle \sum_{i = 0}^{\inf (2n,k)} \Gamma_{i}^{\left[2\atop{\vdots \atop 2}\right]\times k} 2^{-2i}  =
   2^{n+k(n-2)}+2^{-n+k(n-2)}\cdot[3\cdot2^k-3] +2^{-2n+k(n-2)}\cdot[6\cdot2^{k-1}-6] \\
   +2^{-3n+kn}-6\cdot2^{n(k-3)-k}+8\cdot2^{-3n+k(n-2)}.
\end{cases}
    \end{equation}
 \end{lem}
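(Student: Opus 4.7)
The three identities are specializations of the master formula \eqref{eq 2.2} at $q=0,1,2$. The plan is to compute $R_{q,n}^{(k)}$ directly from its combinatorial description in \eqref{eq 2.3} and then divide by the normalization factor $2^{q(2n+k)-(k+1)n}$ to isolate $\sum_i \Gamma_i^{\left[2\atop{\vdots \atop 2}\right]\times k} 2^{-iq}$.

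The first identity is literally \eqref{eq 2.5}: the $2^{(k+1)n}$ comes from the $n$ blocks, each determined freely by its $k+1$ entries $\alpha_j^{(\ell)}$. The second identity is a rearrangement of \eqref{eq 2.4}: for $q=1$ the system reduces to $Y U_j = 0$ for $j=1,\dots,n$, and partitioning on whether $Y=0$ (leaving all $U_j$ free, $2^{2n}$ solutions) or $Y\neq 0$ (forcing $U_j=0$, $2^k-1$ solutions) yields $R_{1,n}^{(k)} = 2^{2n}+2^k-1$; multiplication by $2^{(k-1)n-k}$ then gives the claimed three-term expression.

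The substantive work is the case $q=2$. Since the equations $Y_1 U_j^{(1)} + Y_2 U_j^{(2)} = 0$ decouple across $j$, one has
\[R_{2,n}^{(k)} = \sum_{\deg Y_i \leq k-1} M(Y_1, Y_2)^n,\]
where $M(Y_1, Y_2)$ counts pairs $(U^{(1)}, U^{(2)}) \in \mathbb{F}_2[T]^2$ with $\deg U^{(i)} \leq 1$ satisfying the single equation. When both $Y_i$ are nonzero, I would write $Y_1 = dA$, $Y_2 = dB$ with $d = \gcd(Y_1, Y_2)$ and $\gcd(A, B) = 1$; the solutions are then $(U^{(1)}, U^{(2)}) = (BW, AW)$ with $\deg W \leq 1 - \max(\deg A, \deg B)$. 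Stratifying by $\max(\deg A, \deg B)$, one gets $M = 16$ at $(0,0)$; $M = 4$ on the $3(2^k-1)$ pairs where either exactly one $Y_i$ vanishes or $Y_1 = Y_2 \neq 0$; $M = 2$ when $\max(\deg A, \deg B) = 1$; and $M = 1$ otherwise. Collecting gives
\[R_{2,n}^{(k)} = 2^{4n} + 3(2^k-1)\,2^{2n} + 3(2^k-2)\,2^n + (2^k-2)(2^k-4),\]
and multiplication by $2^{(k-3)n - 2k}$ distributes into the six summands stated in the lemma.

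The main obstacle is the enumeration of the $M=2$ stratum: the coprime pairs $(A,B)$ with $\max(\deg A, \deg B) = 1$ form three families of two, namely $A = 1$ with $B \in \{T, T+1\}$; $B = 1$ with $A \in \{T, T+1\}$; and the easily overlooked family $\{A, B\} = \{T, T+1\}$, i.e.\ $(A,B) \in \{(T, T+1), (T+1, T)\}$. Together with the degree constraint $\deg d \leq k - 2$ inherited from $\deg Y_i \leq k-1$, these six classes contribute $6(2^{k-1}-1) = 3(2^k-2)$ pairs, accounting for the coefficient $6 \cdot 2^{k-1} - 6$ appearing in the third term; the $M = 1$ total $(2^k-2)(2^k-4)$ then follows by subtracting the $M \in \{2, 4\}$ pairs from $(2^k-1)^2$.
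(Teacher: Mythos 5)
Your proposal is correct, and I checked the key counts: the $q=2$ stratification gives $M=16$ once, $M=4$ on $3(2^k-1)$ pairs, $M=2$ on $6(2^{k-1}-1)=3(2^k-2)$ pairs (your six coprime classes $(A,B)$ with $\max(\deg A,\deg B)=1$ and $\deg d\leq k-2$ are exactly right, including the $\{T,T+1\}$ family), and $M=1$ on $(2^k-1)^2-(2^k-1)-3(2^k-2)=(2^k-2)(2^k-4)$ pairs; multiplying $R_{2,n}^{(k)}=2^{4n}+3(2^k-1)2^{2n}+3(2^k-2)2^{n}+(2^k-2)(2^k-4)$ by $2^{(k-3)n-2k}$ reproduces the six summands of the third identity term by term, and the $q=0,1$ cases match \eqref{eq 2.5} and \eqref{eq 2.4}. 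Your route is, however, not the one taken in the paper: there the proof of Lemma \ref{lem 2.2} consists of a citation, namely ``recall the relations (3.15) in [15]'', so the moment identities are imported from an earlier paper rather than derived. What your argument buys is a self-contained derivation inside the present framework: you exploit that the system \eqref{eq 2.3} decouples over the $n$ blocks, so $R_{q,n}^{(k)}=\sum_{Y_1,\dots,Y_q}M(Y_1,\dots,Y_q)^n$, and for $q=2$ the count $M(Y_1,Y_2)$ is handled by the gcd parametrization $U^{(1)}=BW$, $U^{(2)}=AW$ — essentially the computation that must underlie the cited relations, but made explicit here and valid for all $n$ and all $k\geqslant 1$. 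The only presentational caveat is that $q=0$ is a degenerate specialization of \eqref{eq 2.2} (the empty product/integral equals $1$); it is cleaner to quote \eqref{eq 2.5} directly, as you in effect do.
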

 \begin{proof}
Recall the relations (3.15) in [15].
\end{proof}

 \begin{lem}
\label{lem 2.3}
 The number of rank 2n n-times persymmetric matrices of the form  \eqref{eq 2.1} is equal to :\\
  \begin{equation}
 \label{eq 2.10}
 \displaystyle   \Gamma_{2n}^{\left[2\atop{\vdots \atop 2}\right]\times k}  = 2^{n}\prod_{j=1}^{n}(2^{k}-2^{2n -j}) 
 \end{equation}
 \end{lem}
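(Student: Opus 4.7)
My plan is to reduce the problem to a spanning condition on $k$-tuples of vectors in $\mathbb{F}_2^{2n}$, and then to carry out the count via M\"obius inversion on the subspace lattice.

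First I would permute the rows of the matrix \eqref{eq 2.1} so that the odd-indexed rows $r_{2i-1}$ come in positions $1,\dots,n$ and the even-indexed rows $r_{2i}$ in positions $n+1,\dots,2n$. Since row permutation preserves rank, this produces a matrix $\binom{A}{B}$ of rank $2n$ if and only if the original has rank $2n$, where $A=[v_1|v_2|\cdots|v_k]$, $B=[v_2|\cdots|v_{k+1}]$, and $v_j=(\alpha_j^{(1)},\dots,\alpha_j^{(n)})^T\in\mathbb{F}_2^n$. Reading off columns of $\binom{A}{B}$, full rank is equivalent to the $k$ pairs $(v_j,v_{j+1})\in\mathbb{F}_2^{2n}$ spanning the whole of $\mathbb{F}_2^{2n}$.

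Next, I would apply M\"obius inversion over the lattice of subspaces of $\mathbb{F}_2^{2n}$:
\begin{equation*}
\Gamma_{2n}^{\left[2\atop{\vdots\atop 2}\right]\times k}=\sum_{U\le\mathbb{F}_2^{2n}}(-1)^{2n-\dim U}\,2^{\binom{2n-\dim U}{2}}\,|S_U|,
\end{equation*}
where $|S_U|$ is the number of sequences $(v_1,\dots,v_{k+1})\in(\mathbb{F}_2^n)^{k+1}$ with every consecutive pair $(v_j,v_{j+1})$ lying in $U$. A character-expansion identity together with telescoping over $v_1,\dots,v_{k+1}$ reduces $|S_U|$ to $2^{n(k+1)-k(2n-\dim U)}\,N(U^\perp)$, where $N(V)$ counts closed walks $0\to 0$ of length $k$ in $\mathbb{F}_2^n$ with every step in $V$. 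A direct verification shows $|S_U|=2^{(n-1)k+n}$ uniformly over all hyperplanes $U\le\mathbb{F}_2^{2n}$, which accounts cleanly for the codimension-one contribution.

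The main obstacle will be handling $|S_U|$ when $U$ has codimension at least two, since then $|S_U|$ genuinely depends on how $U$ interacts with the two projections $\pi_1,\pi_2\colon\mathbb{F}_2^{2n}\to\mathbb{F}_2^n$, not only on $\dim U$. I would overcome this by stratifying $U$ by the invariants $\dim\pi_i(U)$ and $\dim(U\cap\ker\pi_j)$, enumerating within each stratum using Gaussian binomial coefficients, and collapsing the resulting weighted sum via standard $q$-binomial identities. The final answer $2^n\prod_{j=1}^n(2^k-2^{2n-j})$ then admits the interpretation as $2^n$ (the freedom in the last column $v_{k+1}$) times $\prod_{j=1}^n(2^k-2^{2n-j})$, which counts ordered $n$-tuples in $\mathbb{F}_2^k$ extending a fixed $n$-dimensional subspace (the row space of $A$) to a $2n$-dimensional subspace (the row space of $\binom{A}{B}$).
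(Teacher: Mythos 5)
Your opening reductions are sound: permuting rows to get $\binom{A}{B}$ with $A=[v_1|\cdots|v_k]$, $B=[v_2|\cdots|v_{k+1}]$, the equivalence of rank $2n$ with the $k$ vectors $(v_j,v_{j+1})\in\mathbb{F}_2^{2n}$ spanning, the M\"obius inversion set-up with $\mu(U,\mathbb{F}_2^{2n})=(-1)^{2n-\dim U}2^{\binom{2n-\dim U}{2}}$, the character/telescoping identity $|S_U|=2^{n(k+1)-k(2n-\dim U)}N(U^{\perp})$, and the hyperplane value $|S_U|=2^{(n-1)k+n}$ all check out. But the argument stops exactly where the real work begins. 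The sum over subspaces of codimension at least two is only announced, and the stratification you propose -- by $\dim\pi_i(U)$ and $\dim(U\cap\ker\pi_j)$ -- does not determine $|S_U|$. Already for $U^{\perp}=K\times L$ with $K,L\le\mathbb{F}_2^{n}$ one finds $N(U^{\perp})=|K\cap L|^{k-1}$, so the count involves $\dim(K\cap L)$, which is not among your invariants; for a general linear relation $U^{\perp}$ the walk count depends on the dimensions of the iterated compositions of the relation (the subspaces reachable from $0$ in $j$ steps and the fibre sizes along the walk), and these vary within your strata and with $k$. Hence the promised "collapsing via standard $q$-binomial identities" cannot be executed as described, and the closing interpretation of $2^{n}\prod_{j=1}^{n}(2^{k}-2^{2n-j})$ as "free last column times an extension count" is a heuristic, not a derivation. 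This is a genuine gap: the central computation of the lemma is missing.

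For comparison, the paper does not evaluate such a lattice sum at all: it obtains \eqref{eq 2.10} by quoting formula (2.1) of reference [10] with $s_1=s_2=\cdots=s_m=2$, $m=n$, $\delta=2n$, i.e. it invokes a known product formula for the number of full-rank $n$-fold persymmetric matrices. If you want a self-contained proof along your lines, you would need to identify the full set of invariants of the relation $U^{\perp}$ on which $N(U^{\perp})$ depends and sum over them, or else abandon the lattice sum in favour of a column-by-column (or block-by-block) recursion on the rank, which is the more natural route to a product of factors $(2^{k}-2^{2n-j})$.
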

  \begin{proof}
Use the formula (2.1)  in [10] with  $s_{1}=s_{2}=\ldots =s_{m} =2$ ,\; $\delta = \sum_{j=1}^{m}s_{j}=2n$ and m=n.
\end{proof}

 \begin{lem}
\label{lem 2.4} 
  We postulate that the number of sextuple persymmetric matrices of the form \eqref{eq 1.1} of rank i can be expressed in the following manner :\\
   \begin{equation}
    \label{eq 2.11}
 \Gamma_{i}^{\left[2\atop {2\atop {2\atop{2\atop {2\atop2}}}}\right]\times k} \\ = \begin{cases}
1 & \text{if  } i = 0,        \\
 a_{1} & \text{if   } i=1,\\  
 a_{2}\cdot2^{k}+ b_{2}  & \text{if   }  i = 2,  \\
 a_{3}\cdot 2^{k}+ b_{3}  & \text{if   }  i = 3, \\
  a_{4}\cdot 2^{2k} +b_{4}\cdot 2^{k}+c_{4}  & \text{if   }  i=4, \\  
  a_{5} \cdot 2^{2k}+b_{5} \cdot2^{k} +c_{5} & \text{if   }  i=5, \\
  a_{6}\cdot 2^{3k}+b_{6}\cdot2^{2k} +c_{6}\cdot 2^{k}  + d_{6}   & \text{if   }  i=6. \\
a_{7}\cdot 2^{3k}+b_{7}\cdot2^{2k}+c_{7}\cdot2^{k} +d_{7} & \text{if   }  i=7. \\
 a_{8}\cdot 2^{4k}+b_{8}\cdot 2^{3k} +c_{8}\cdot2^{2k}+d_{8}\cdot 2^{k} +e_{8} & \text{if   }  i=8.\\
     a_{9}\cdot 2^{4k}+b_{9}\cdot 2^{3k} +c_{9}\cdot2^{2k}+d_{9}\cdot 2^{k} +e_{9} & \text{if   }  i=9.\\   
a_{10}\cdot2^{5k} + b_{10}\cdot 2^{4k}+c_{10}\cdot 2^{3k} +d_{10}\cdot2^{2k}+e_{10}\cdot 2^{k} +f_{10} & \text{if   }  i=10.\\
  a_{11}\cdot2^{5k}+b_{11}\cdot2^{4k}+c_{11}\cdot2^{3k}+d_{11}\cdot2^{2k}+e_{11}\cdot2^{k}+f_{11} &  \text{if } i=11. \\
a_{12}\cdot2^{6k}+b_{12}\cdot2^{5k}+c_{12}\cdot2^{4k}+d_{12}\cdot2^{3k}+e_{12}\cdot2^{2k}+f_{12}\cdot2^{k}+g_{12} & \text{if } i=12. \\
  \end{cases}    
  \end{equation}
\end{lem}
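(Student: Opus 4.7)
The plan is to verify the postulated polynomial shape in \eqref{eq 2.11} by matching it against results already available in the preliminaries, and then to argue structurally for the ranks not covered. For $i\in\{0,1,\ldots,7\}$, each formula of Lemma \ref{lem 2.1} is already a polynomial in $2^{k}$ of degree $\lfloor i/2\rfloor$ whose coefficients are polynomials in $2^{n}$. Specialising to $n=6$ produces concrete numerical values for $a_{1}$, for $(a_{j},b_{j})$ when $j\in\{2,3\}$, for $(a_{j},b_{j},c_{j})$ when $j\in\{4,5\}$, and for $(a_{j},b_{j},c_{j},d_{j})$ when $j\in\{6,7\}$, all of which fit the ansatz on the nose. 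For the top rank $i=12=2n$, Lemma \ref{lem 2.3} specialised to $n=6$ gives
\[
\Gamma_{12}^{\left[2\atop{2\atop{2\atop{2\atop{2\atop 2}}}}\right]\times k}
 = 2^{6}\prod_{j=1}^{6}\bigl(2^{k}-2^{12-j}\bigr),
\]
which expands to a polynomial of degree $6$ in $2^{k}$, yielding $a_{12}=64$ and determining $b_{12},c_{12},\ldots,g_{12}$.

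For the four intermediate ranks $i\in\{8,9,10,11\}$ no direct closed form is available, so these constitute the real content of the ansatz. The plan here is to exploit \eqref{eq 2.7}: for each fixed integer $q\geqslant 1$ it yields the relation
\[
\sum_{i=0}^{\min(12,k)} \Gamma_{i}^{\left[2\atop{2\atop{2\atop{2\atop{2\atop 2}}}}\right]\times k}\,2^{-iq}
 \;=\; 2^{6(k+1)-q(12+k)}\,R_{q,6}^{(k)}
\]
among the $\Gamma_i$. Once $R_{q,6}^{(k)}$ is shown to be a polynomial in $2^{k}$ (of controlled degree depending on $q$) by direct analysis of the polynomial system \eqref{eq 2.3}, letting $q=1,2,3,\ldots$ and substituting the already-determined $\Gamma_i$ for $i\leqslant 7$ and for $i=12$ produces a linear system whose solution forces each remaining $\Gamma_i$ into the postulated polynomial form in $2^k$.

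The main obstacle is the degree bound: a priori $\Gamma_i$ for $i<12$ could be a polynomial of degree up to $6$ in $2^{k}$, since only the total-count identity \eqref{eq 2.5} constrains the global degree. Ruling out spurious higher-degree contributions at intermediate ranks is the delicate step, and rests on the parametric interpretation already exploited in Lemma \ref{lem 2.1}: a rank-$i$ sextuple persymmetric matrix of the shape \eqref{eq 1.1} is described, up to lower-order corrections, by $\lfloor i/2\rfloor$ \emph{free} blocks of two persymmetric rows each contributing a factor $2^{k}$, while the remaining blocks are pinned down by the pivot structure and contribute at most $O(1)$ per block. Once this degree bound is accepted, the ansatz is confirmed; the explicit coefficients $a_i,b_i,c_i,\ldots$ are then extracted in the subsequent sections by further substitution into the above identity.
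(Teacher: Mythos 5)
Your reduction for ranks $i\leqslant 7$ (Lemma \ref{lem 2.1} with $n=6$) and for the top rank $i=12$ (Lemma \ref{lem 2.3} with $n=6$, giving $\Gamma_{12}=2^{6}\prod_{j=1}^{6}(2^{k}-2^{12-j})$) is fine, but for $i\in\{8,9,10,11\}$ --- the only ranks for which the lemma asserts anything not already contained in the preliminaries --- your argument has a genuine gap. First, the moment identities you propose to use require $R_{q,6}^{(k)}$ to be known, as a polynomial in $2^{k}$, for enough values of $q$; in this paper such information is available only for $q\leqslant 2$ (Lemma \ref{lem 2.2}, specialised in Lemma \ref{lem 2.6}), i.e.\ three identities, which cannot by themselves determine four unknown functions $\Gamma_{8},\ldots,\Gamma_{11}$ of $k$, and computing $R_{q,6}^{(k)}$ for larger $q$ ``by direct analysis of the system \eqref{eq 2.3}'' is essentially equivalent to the enumeration problem you are trying to solve, so nothing is gained there. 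Second, and more fundamentally, the assertion that each $\Gamma_{i}$ is a polynomial in $2^{k}$ of degree $\lfloor i/2\rfloor$ is exactly the content of the lemma, and your justification of it (``free blocks contribute a factor $2^{k}$ each, the remaining blocks $O(1)$'', ``once this degree bound is accepted, the ansatz is confirmed'') is an admission rather than a proof: no counting argument is supplied, and the identities \eqref{eq 2.5} and \eqref{eq 2.7} do not by themselves exclude higher powers of $2^{k}$ occurring in individual $\Gamma_{i}$ and cancelling in the sums. The delicate step you flag is precisely the one left open.

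For comparison, the paper does not prove this statement either: Lemma \ref{lem 2.4} is explicitly a postulate, supported only by analogy with the verified closed forms for double and triple persymmetric matrices (and the quadruple and quintuple cases of [12,13]), and checked a posteriori, e.g.\ against the independent computation for $k=6$ quoted after Theorem \ref{thm 2.1}. Your plan is therefore more ambitious than the paper's, but as written it does not close the gap; a genuine proof would need an argument --- for instance a recursion in $k$ relating the rank distribution for width $k+1$ to that for width $k$, as in the earlier papers of this series --- establishing polynomial dependence on $2^{k}$ with the stated degrees for every rank $i$.
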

  \begin{proof}
  To justify our assumption we recall that in the case n=2  (see [5]) we have :
   \begin{equation*}
 \Gamma_{i}^{\left[2\atop 2 \right]\times k} = 
\begin{cases}
1 & \text{if  } i = 0,\; k \geq 1,         \\
 9 & \text{if   } i=1,\; k > 1, \\
 6\cdot 2^{k} + 30 & \text{if   }  i = 2 ,\; k > 2, \\
  42\cdot 2^{k} -168 & \text{if   }  i=3 ,\; k > 3,\\ 
4\cdot 2^{2k} -48\cdot2^{k} + 128  & \text{if   }  i=4 ,\; k \geq 4
  \end{cases}    
   \end{equation*}
   and in the case n=3  (see [6] ) :
   \[ \Gamma_{i}^{\left[2\atop{ 2\atop 2} \right]\times k}=
 \begin{cases}
1  &\text{if  }  i = 0 \\
21   &\text{if  }  i = 1 \\
14\cdot2^{k} + 266  & \text{if   } i = 2 \\
294\cdot2^{k} + 1344   & \text{if   } i = 3 \\
28\cdot2^{2k} + 2604\cdot2^{k}  - 22624 & \text{if   } i = 4 \\
420\cdot2^{2k} - 10080\cdot2^{k} + 53760   & \text{if   } i = 5 \\
8\cdot2^{3k} - 448\cdot2^{2k} + 7168\cdot2^{k} - 32768  & \text{if  } i = 6,\;k\geq 6
\end{cases}
\]
 Recall similar expressions concerning quadruple, quintuple persymmetric matrices see [12,13].
  \end{proof}
  
   \begin{lem}
\label{lem 2.5} 
  We postulate : \\
   \begin{equation}
 \label{eq 2.12}
  \begin{cases} 
    \displaystyle   \Gamma_{7}^{\left[2\atop {2\atop {2\atop{2\atop{2\atop2} }}}\right]\times k}  = 0 \quad \text{for} \quad k = 6 \\
   \displaystyle   \Gamma_{8}^{\left[2\atop {2\atop {2\atop{2\atop{2\atop2} }}}\right]\times k}  = 0\quad \text{for} \quad k\in \{6,7\} \\
   \displaystyle   \Gamma_{9}^{\left[2\atop {2\atop {2\atop{2\atop{2\atop2} }}}\right]\times k} =0\quad \text{for} \quad k\in \{6,7,8\} \\
 \displaystyle   \Gamma_{10}^{\left[2\atop {2\atop {2\atop{2\atop{2\atop2} }}}\right]\times k} = 0 \quad \text{for} \quad k\in \{6,7,8,9\} \\
   \displaystyle   \Gamma_{11}^{\left[2\atop {2\atop {2\atop{2\atop{2\atop2} }}}\right]\times k}= 0 \quad \text{for} \quad k\in \{6,7,8,9,10\} \\
     \displaystyle   \Gamma_{12}^{\left[2\atop {2\atop {2\atop{2\atop{2\atop2} }}}\right]\times k}= 0 \quad \text{for} \quad k\in \{6,7,8,9,10,11\} \\  
\end{cases}
    \end{equation}
That is :\\
       \begin{equation*}
  \begin{cases} 
    \displaystyle   \Gamma_{7}^{\left[2\atop {2\atop {2\atop{2\atop{2\atop2} }}}\right]\times k}  = (2^k-2^6)\cdot(\alpha\cdot 2^{2k}+ \ldots )\\
     \displaystyle   \Gamma_{8}^{\left[2\atop {2\atop {2\atop{2\atop{2\atop2} }}}\right]\times k}  =(2^k-2^6)(2^k-2^7)\cdot(\alpha\cdot 2^{2k}+ \ldots )\\
      \displaystyle   \Gamma_{9}^{\left[2\atop {2\atop {2\atop{2\atop{2\atop2} }}}\right]\times k} =(2^k-2^6)(2^k-2^7)(2^k-2^8)\cdot(\alpha\cdot 2^{k}+ \ldots )\\
  \displaystyle   \Gamma_{10}^{\left[2\atop {2\atop {2\atop{2\atop{2\atop2} }}}\right]\times k} = (2^k-2^6)(2^k-2^7)(2^k-2^8)(2^k-2^9)\cdot(\alpha\cdot 2^{k}+ \ldots )\\
 \displaystyle   \Gamma_{11}^{\left[2\atop {2\atop {2\atop{2\atop{2\atop2} }}}\right]\times k}= \alpha(2^k-2^6)(2^k-2^7)(2^k-2^8)(2^k-2^9)(2^k-2^{10})
\\ 
  \displaystyle   \Gamma_{12}^{\left[2\atop {2\atop {2\atop{2\atop{2\atop2} }}}\right]\times k}= \alpha(2^k-2^6)(2^k-2^7)(2^k-2^8)(2^k-2^9)(2^k-2^{10})(2^k-2^{11})
   \end{cases}
    \end{equation*}
 \end{lem}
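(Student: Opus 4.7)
The proof splits naturally into two independent assertions: the vanishing claims in \eqref{eq 2.12}, and the factored forms that follow from them.

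My plan is to establish the six vanishing identities by a simple rank bound. The matrix $D^{\left[2\atop {2\atop {2\atop{2\atop {2\atop2}}}}\right]\times k}$ is a $12\times k$ matrix over $\mathbb{F}_2$, so its rank is at most $\min(12,k)$. In particular, for $k=6$ every such matrix has rank $\leq 6$, which forces $\Gamma_{i}^{\left[2\atop {2\atop {2\atop{2\atop{2\atop2} }}}\right]\times 6}=0$ for every $i\geq 7$. Applying the same observation at $k=7,8,9,10,11$ yields the remaining five lines of \eqref{eq 2.12}. This step is entirely elementary; there is no real obstacle.

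Next I would deduce the factored presentations at the bottom of the lemma from the postulated polynomial shape in \eqref{eq 2.11} of Lemma~\ref{lem 2.4}. Writing $T = 2^k$, each $\Gamma_{i}^{\left[2\atop {2\atop {2\atop{2\atop{2\atop2}}}}\right]\times k}$ is, by hypothesis, a polynomial in $T$ of a specified degree (3 for $i=7$, 4 for $i=8,9$, 5 for $i=10,11$, 6 for $i=12$). The vanishing values established in the previous step translate into explicit roots of these polynomials: $T=2^6$ for $i=7$; $T=2^6,2^7$ for $i=8$; $T=2^6,2^7,2^8$ for $i=9$; $T=2^6,\ldots,2^9$ for $i=10$; $T=2^6,\ldots,2^{10}$ for $i=11$; and $T=2^6,\ldots,2^{11}$ for $i=12$. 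Since the number of roots matches (in the cases $i=11,12$) or is strictly less than (in the cases $i=7,\ldots,10$) the degree of the polynomial, the factor theorem over the rationals forces the claimed factorization, with a quotient of the appropriate residual degree. In particular, for $i=11$ and $i=12$ the quotient must be a constant $\alpha$, while for $i=7,8$ the quotient is a polynomial in $T=2^k$ of degree $2$, for $i=9,10$ a polynomial of degree $1$, as stated.

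The only subtle point, and the step that I would flag as the main obstacle, is that both parts of Lemma~\ref{lem 2.5} rely on the hypothesis of Lemma~\ref{lem 2.4}: the existence of a clean polynomial expression in $2^k$ with the exact degrees listed in \eqref{eq 2.11}. The vanishing statements \eqref{eq 2.12} themselves are unconditional (coming from the rank bound), but the factored forms presume that $\Gamma_i$ is genuinely a polynomial in $2^k$ of the stated degree, otherwise additional spurious roots could occur or degrees could be off. Granted Lemma~\ref{lem 2.4}, however, the factorizations follow by matching root count to degree as above, and there is nothing further to compute at this stage, since the leading coefficient $\alpha$ and the lower coefficients of the quotient polynomials will be pinned down only later from the linear relations furnished by \eqref{eq 2.7}, \eqref{eq 2.9} and \eqref{eq 2.10}.
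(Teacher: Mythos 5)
Your first step is correct but carries essentially no content: in every pair $(i,k)$ listed in \eqref{eq 2.12} one has $i>k$, and a $12\times k$ matrix has rank at most $\min(12,k)$, so those counts vanish trivially. The genuine content of the lemma --- and the reason the paper can only \emph{postulate} it --- lies in the passage to the factored forms, and there your argument has a real gap. Your factor-theorem step needs the polynomial expressions of Lemma~\ref{lem 2.4} to agree with the actual counts at $k=6,\dots,11$, so that the trivial zeros become roots of those polynomials. But Lemma~\ref{lem 2.4} cannot be read as valid for all small $k$ (at $k=1$ the number of rank-one matrices is $2^{12}-1$, not the constant $a_1=189$ of Theorem~\ref{thm 2.1}), and the ranges in Theorem~\ref{thm 2.1} indicate the intended domain: the expression for rank $12$, for instance, is asserted only for $k\geqslant 12$, whereas the roots you need are at $k=6,\dots,11$, all outside that range. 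Knowing that the count is zero at those $k$ therefore says nothing about the value of the postulated polynomial there; the claim that the polynomial formulas extend down to (equivalently, vanish at) these trivially-zero values is precisely what \eqref{eq 2.12} together with the displayed factorizations is postulating, and it does not follow from the rank bound plus Lemma~\ref{lem 2.4}. Your remark that the vanishing statements are ``unconditional'' is true only of the literal counts, which is the uninteresting half of the assertion.

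For comparison, the paper does not prove this lemma at all: it justifies the postulate by analogy with the verified cases $n=2$ and $n=3$ (and the quintuple case of [13]), where the explicit formulas, established a priori only for large $k$, are observed to factor through the trivial zeros $2^k-2^j$. So your proposal is not an alternative proof of the same statement; it is an attempted derivation whose key step silently assumes the very extension being postulated. If you wish to keep your route, you would have to strengthen Lemma~\ref{lem 2.4} to say that the polynomial expressions hold for all $k\geqslant 6$; granted that, your root-counting does yield the full factorizations for $i=11,12$ and the partial ones for $i=7,\dots,10$, but that strengthened hypothesis is exactly the point at issue, not something available for free.
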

 \begin{proof}
 To justify our assumption we have in the case n=2 :\\
    \begin{equation*}
 \Gamma_{i}^{\left[2\atop 2 \right]\times k} = 
\begin{cases}
1 & \text{if  } i = 0,\; k \geq 1,         \\
 9 & \text{if   } i=1,\; k > 1, \\
 6\cdot 2^{k} + 30 & \text{if   }  i = 2 ,\; k > 2, \\
  42\cdot 2^{k} -168=42\cdot(2^k-2^2) & \text{if   }  i=3 ,\; k > 3,\\ 
4\cdot 2^{2k} -48\cdot2^{k} + 128 =4\cdot(2^k-2^2)(2^k -2^3)  & \text{if   }  i=4 ,\; k \geq 4
  \end{cases}  
    \end{equation*}
 and in the case n=3 :\\
  \[ \Gamma_{i}^{\left[2\atop{ 2\atop 2} \right]\times k}=
 \begin{cases}
1  &\text{if  }  i = 0 \\
21   &\text{if  }  i = 1 \\
14\cdot2^{k} + 266  & \text{if   } i = 2 \\
294\cdot2^{k} + 1344   & \text{if   } i = 3 \\
28\cdot2^{2k} + 2604\cdot2^{k}  - 22624 = (2^k-2^3)(28\cdot2^k+2828) & \text{if   } i = 4 \\
420\cdot2^{2k} - 10080\cdot2^{k} + 53760 =420\cdot(2^k-2^3)(2^k-2^4)  & \text{if   } i = 5 \\
8\cdot2^{3k} - 448\cdot2^{2k} + 7168\cdot2^{k} - 32768 =8\cdot(2^k-2^3)(2^k-2^4)(2^k-2^5)  & \text{if  } i = 6,\;k\geq 6
\end{cases}
\]
See also the similar problem concerning quintuple persymmetric matrices [13].
  \end{proof}

   \begin{lem}
\label{lem 2.6}   
      \begin{equation}
 \label{eq 2.13}
  \begin{cases}  
\displaystyle  \sum_{i = 0}^{12}   \Gamma_{i}^{\left[2\atop {2\atop {2\atop{2\atop{2\atop2}}}}\right]\times k}   = 2^{6k+6}, \\ 
\displaystyle  \sum_{i = 0}^{12} \Gamma_{i}^{\left[2\atop {2\atop {2\atop{2\atop{2\atop2}}}}\right]\times k}   2^{12-i}  =2^{6k+6}+262080\cdot2^{5k},\\
  \displaystyle \sum_{i = 0}^{12} \Gamma_{i}^{\left[2\atop {2\atop {2\atop{2\atop{2\atop2}}}}\right]\times k} 2^{24-2i}   =2^{6k+6}+798336\cdot2^{5k}
  +1072931328\cdot2^{4k}
  \end{cases}
    \end{equation}
\end{lem}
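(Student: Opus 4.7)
The plan is to recognize that all three identities in Lemma~\ref{lem 2.6} are simply the $n=6$ specializations of the three moment identities recorded in Lemma~\ref{lem 2.2}, renormalized so that the powers of $2$ attached to $\Gamma_i$ in the summand take the prescribed form $2^{0}$, $2^{12-i}$, $2^{24-2i}$ respectively.

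First I would dispose of the first identity. Its left-hand side partitions all sextuple persymmetric matrices of shape \eqref{eq 1.1} by rank, and such a matrix is parametrized by the $6(k+1)$ free bits $\alpha_{1}^{(j)},\ldots,\alpha_{k+1}^{(j)}$ for $j=1,\ldots,6$. The total is therefore $2^{6(k+1)}=2^{6k+6}$, which is also exactly the $n=6$ instance of the first line of \eqref{eq 2.9}.

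For the second identity I would invoke relation \eqref{eq 2.4} with $n=6$, i.e.\ the $q=1$ case of the Parseval-type formula \eqref{eq 2.2}. Solving for $\sum_{i=0}^{12}\Gamma_{i}2^{-i}$ in closed form and multiplying both sides by $2^{12}$ converts the weight $2^{-i}$ into $2^{12-i}$ and produces the right-hand side $2^{6k+6}+(2^{18}-2^{6})\cdot 2^{5k}$, since $2^{18}-2^{6}=262080$. Equivalently one can read this off the second line of \eqref{eq 2.9} with $n=6$ and scale.

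The third identity I would handle in the same spirit, starting from the third line of \eqref{eq 2.9} (the $q=2$ moment) specialized to $n=6$ and multiplied through by $2^{24}$. After expansion, regrouping the six summands by powers of $2^{k}$ leaves coefficient $1$ in front of $2^{6k+6}$, coefficient $3\cdot 2^{18}+3\cdot 2^{12}-6\cdot 2^{6}$ in front of $2^{5k}$, and coefficient $2^{30}-3\cdot 2^{18}-6\cdot 2^{12}+8\cdot 2^{6}$ in front of $2^{4k}$. The only step requiring care, and therefore the main (essentially only) obstacle, is the bookkeeping of these six terms together with the arithmetic check that the two constants evaluate to $798336$ and $1072931328$; since the content of Lemma~\ref{lem 2.2} is granted, the lemma reduces to this routine verification.
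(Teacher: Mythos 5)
Your proposal is correct and takes essentially the same route as the paper, whose entire proof is ``Apply \eqref{eq 2.9} with $n=6$'': you specialize Lemma \ref{lem 2.2} to $n=6$ and rescale by $2^{0}$, $2^{12}$, $2^{24}$. Your arithmetic also checks out, since $2^{18}-2^{6}=262080$, $3\cdot2^{18}+3\cdot2^{12}-6\cdot2^{6}=798336$ and $2^{30}-3\cdot2^{18}-6\cdot2^{12}+8\cdot2^{6}=1072931328$.
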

\begin{proof}
Apply \eqref{eq 2.9} with n=6.
  \end{proof}

  \subsection{Computation of the number of sextuple persymmetric matrices of the form (1.1) of rank I}
\begin{thm}
\label{thm 2.1}
We have whenever $k\geqslant 6: $\\
 \begin{equation}
 \label{eq 2.14}
  \begin{cases} 
 \displaystyle   \Gamma_{0}^{\left[2\atop {2\atop {2\atop{2\atop{2\atop2}}}}\right]\times k}   = 1 \quad \text{if} \quad  k\geqslant 1 \\
 \displaystyle    \Gamma_{1}^{\left[2\atop {2\atop {2\atop{2\atop{2\atop2}}}}\right]\times k}   = 189 \quad \text{if} \quad  k\geqslant 2 \\
\displaystyle   \Gamma_{2}^{\left[2\atop {2\atop {2\atop{2\atop{2\atop2}}}}\right]\times k}  = 126\cdot2^{k}+27090 \quad \text{for} \quad k\geqslant 3\\
\displaystyle    \Gamma_{3}^{\left[2\atop {2\atop {2\atop{2\atop{2\atop2}}}}\right]\times k}  = 27342\cdot2^{k}+3406032 \quad \text{for} \quad k\geqslant 4\\
\displaystyle    \Gamma_{4}^{\left[2\atop {2\atop {2\atop{2\atop{2\atop2}}}}\right]\times k}  = 2604\cdot2^{2k}+4070052\cdot2^{k}+374121888\quad \text{for} \quad k\geqslant 5\\
 \displaystyle    \Gamma_{5}^{\left[2\atop {2\atop {2\atop{2\atop{2\atop2}}}}\right]\times k}  =  585900\cdot2^{2k}+494499600\cdot2^{k}+123537015\cdot2^{8}   \quad \text{for} \quad k\geqslant 6\\
   \displaystyle   \Gamma_{6}^{\left[2\atop {2\atop {2\atop{2\atop{2\atop2}}}}\right]\times k}  = 11160\cdot2^{3k}
    +84135240\cdot2^{2k}+2^8\cdot184392495\cdot2^{k}+29391255\cdot2^{15} \quad \text{for} \quad k\geqslant 7 \\
   \displaystyle   \Gamma_{7}^{\left[2\atop {2\atop {2\atop{2\atop{2\atop2}}}}\right]\times k} 
    = 2421720\cdot2^{3k} +277589655\cdot2^5\cdot2^{2k} +2431729125\cdot2^{10}\cdot2^{k}\\
     -2996595315\cdot2^{16} \quad \text{for} \quad k\geqslant 8 \\
   \end{cases}
    \end{equation}  
     \begin{equation}
     \label{eq 2.15}
  \begin{cases} 
 \displaystyle   \Gamma_{8}^{\left[2\atop {2\atop {2\atop{2\atop{2\atop2}}}}\right]\times k}  = 10416\cdot2^{4k}+216944\cdot1395\cdot2^{3k}
  +2155757205\cdot2^{8}\cdot2^{2k}\\
  -6999385995\cdot2^{14}\cdot2^{k}+4767802914\cdot2^{20} \quad \text{for} \quad k\geqslant 9 \\
  \displaystyle   \Gamma_{9}^{\left[2\atop {2\atop {2\atop{2\atop{2\atop2}}}}\right]\times k}  = 1968624\cdot2^{4k}+15196608\cdot1395\cdot2^{3k}
  -2387571795\cdot2^{12}\cdot2^{2k}\\
  +4814516070\cdot2^{18}\cdot2^{k}-2760151464\cdot2^{24} \quad \text{for} \quad k\geqslant 10 \\
    \displaystyle   \Gamma_{10}^{\left[2\atop {2\atop {2\atop{2\atop{2\atop2}}}}\right]\times k}  = 2016\cdot\big[2^{5k}+81685\cdot2^{4k}-79052480\cdot2^{3k}+2^{13}\cdot2888735\cdot2^{2k}\\-1239163\cdot2^{21}\cdot2^{k}+2^{30}\cdot82645\big] \quad \text{for} \quad k\geqslant 11 \\
  \displaystyle   \Gamma_{11}^{\left[2\atop {2\atop {2\atop{2\atop{2\atop2}}}}\right]\times k}  = 256032\cdot\big[2^{5k}-1984\cdot2^{4k}\\+1269760\cdot2^{3k}-325058560\cdot2^{2k}+31744\cdot2^{20}\cdot2^{k}-2^{40}\big] \quad \text{for} \quad k\geqslant 12 \\
 \displaystyle   \Gamma_{12}^{\left[2\atop {2\atop {2\atop{2\atop{2\atop2}}}}\right]\times k}  = 2^{6}\cdot\big[2^{6k}-63\cdot2^{6}\cdot2^{5k}+651\cdot2^{13}\cdot2^{4k}-1395\cdot2^{21}\cdot2^{3k}\\+651\cdot2^{30}\cdot2^{2k}-63\cdot2^{40}\cdot2^{k}+2^{51}\big] \quad \text{for} \quad k\geqslant 12 \\
\end{cases}
    \end{equation}
\end{thm}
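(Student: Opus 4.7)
The theorem splits naturally into low rank ($0\le i\le 7$) and high rank ($8\le i\le 12$).

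For $0\le i\le 7$ the answer is immediate: specialize the $n$-times persymmetric formulas of Lemma~\ref{lem 2.1} to $n=6$. For instance $(2^{n+1}-2)\cdot 2^k+7\cdot 2^{2n}-25\cdot 2^n+18$ collapses to $126\cdot 2^k+27090$ at $n=6$, giving $\Gamma_2$, and the remaining six cases follow by identical substitutions. For the top rank $i=12$, apply Lemma~\ref{lem 2.3} with $n=6$ to get $\Gamma_{12}=2^6\prod_{j=1}^{6}(2^k-2^{12-j})$; expanding via the elementary symmetric polynomials in $\{2^6,\ldots,2^{11}\}$ produces the stated seven-term expression (the power sum $e_1=63\cdot 2^6$ gives the $2^{5k}$ coefficient $-63\cdot 2^{12}$, etc.).

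For $i\in\{8,9,10,11\}$, use Lemma~\ref{lem 2.5} (combined with the polynomial degrees postulated in Lemma~\ref{lem 2.4}) to write each $\Gamma_i$ in a factored form with reduced free parameters: $\Gamma_8=(2^k-2^6)(2^k-2^7)(a\cdot 2^{2k}+b\cdot 2^k+c)$, $\Gamma_9=(2^k-2^6)(2^k-2^7)(2^k-2^8)(d\cdot 2^k+e)$, $\Gamma_{10}=(2^k-2^6)\cdots(2^k-2^9)(f\cdot 2^k+g)$, and $\Gamma_{11}=h\cdot(2^k-2^6)\cdots(2^k-2^{10})$. This leaves exactly $3+2+2+1=8$ unknowns. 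Plug these parametrized expressions, together with the already-known $\Gamma_0,\ldots,\Gamma_7$ and $\Gamma_{12}$, into the three sum identities of Lemma~\ref{lem 2.6}; each becomes a polynomial identity in $2^k$, so equating coefficients of every power $2^{jk}$ produces a (heavily overdetermined) linear system in the 8 unknowns, which one then solves.

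The main obstacle is arithmetic bookkeeping rather than anything conceptual: the factored polynomials must be expanded in full, the weighted sums $\sum\Gamma_i$, $\sum\Gamma_i\cdot 2^{12-i}$, $\sum\Gamma_i\cdot 2^{24-2i}$ must be assembled, and the constants involved climb up to the order of $10^{10}$. A clean top-down order is to read off the $2^{5k}$ coefficient of the three identities first, which pins down $f$ and $h$ (as a consistency check, $f+h=63\cdot 2^{12}=258048$, $4f+2h=520128$, giving $f=2016$, $h=256032$); then the $2^{4k}$ coefficient to extract $g$, $a$, $d$; and continue downward through $2^{3k},2^{2k},2^{k},2^{0}$ until every parameter is determined. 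The over-determination (three identities contributing up to six coefficient equations each against only $8$ unknowns) provides a built-in consistency check that the postulates of Lemmas~\ref{lem 2.4} and~\ref{lem 2.5} are compatible with the exact rank distribution, and recovering the numerical values quoted in \eqref{eq 2.14}--\eqref{eq 2.15} completes the proof.
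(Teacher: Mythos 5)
Your proposal is correct and follows essentially the same route as the paper: ranks $0$--$7$ by specializing Lemma~\ref{lem 2.1} to $n=6$, rank $12$ from Lemma~\ref{lem 2.3}, and ranks $8$--$11$ by combining the postulated forms of Lemmas~\ref{lem 2.4} and~\ref{lem 2.5} with the moment identities of Lemma~\ref{lem 2.6} and solving the resulting overdetermined linear system. You merely make explicit the coefficient-matching mechanics (e.g.\ $f=2016$, $h=256032$) that the paper's terse proof leaves implicit.
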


\begin{proof}
The proof is just a generalization of the similar proof of \textbf{Theorem 2.1} in [13]\\
We proceed as follows :\\
To prove \eqref{eq 2.14} we apply \eqref{eq 2.8} with n=6.\\
To prove \eqref{eq 2.15} we combine  \eqref{eq 2.10} with n=6,  \eqref{eq 2.11},  \eqref{eq 2.12} and  \eqref{eq 2.13}. 
\end{proof}
\begin{example}
\textbf{Computation of $ \Gamma_{i}^{\left[2\atop {2\atop {2\atop{2\atop{2\atop2}}}}\right]\times 6} $  for $0\leqslant i \leqslant 6 $}
\begin{equation*}
 \Gamma_{i}^{\left[2\atop {2\atop {2\atop{2\atop{2\atop2}}}}\right]\times 6} = 
\begin{cases} 
1  \quad \text{for} \quad i=0 \\
189  \quad \text{for} \quad i=1 \\
35154  \quad \text{for} \quad i=2 \\
5155920  \quad \text{for} \quad i=3 \\
645271200  \quad \text{for} \quad i=4 \\
256536315\cdot 2^8  \quad \text{for} \quad i=5 \\
2^{14}\cdot 264387375  \quad \text{for} \quad i=6 \\
\end{cases}
\end{equation*}
\begin{proof}
Apply \eqref{eq 2.14} with k=6.
\end{proof}
Computing $ \Gamma_{i}^{\left[2\atop{\vdots \atop 2}\right]\times 6}$  for n=6 we obtain the same result (see [14] ).
\end{example}


\begin{thebibliography}{99}
\bibitem{Landsberg}Landsberg, G {Ueber eine Anzahlbestimmung und eine damit zusammenhangende Reihe},
 {J. reine angew.Math}, {\bf 111}(1893),87-88. 
 \bibitem{Fisher and Alexander}Fisher,S.D and Alexander M.N. {Matrices over a finite field}\\
 {Amer.Math.Monthly 73}(1966), 639-641
  \bibitem{Daykin}  Daykin David E,  {Distribution of Bordered Persymmetric Matrices in a finite field}
  {J. reine angew. Math}, {\bf 203}(1960) ,47-54
   \bibitem{Cherly} Cherly, Jorgen. \\ {Exponential sums and rank of persymmetric  matrices over  $\mathbf{F}_2 $  }\\
{arXiv : 0711.1306, 46 pp} 
  \bibitem{Cherly} Cherly, Jorgen. \\ {Exponential sums and rank of  double  persymmetric  matrices over  $\mathbf{F}_2 $  }\\
{arXiv : 0711.1937, 160 pp} 
  \bibitem{Cherly} Cherly, Jorgen. \\ {Exponential sums and rank of  triple  persymmetric  matrices over  $\mathbf{F}_2 $  }\\
{arXiv : 0803.1398, 233 pp} 
  \bibitem{Cherly} Cherly, Jorgen. \\ {Results about   persymmetric  matrices over  $\mathbf{F}_2 $ and related exponentials sums }\\
   {arXiv : 0803.2412v2, 32 pp} 
   \bibitem{Cherly} Cherly, Jorgen. \\ {Polynomial equations and rank of  matrices over  $\mathbf{F}_2 $ related  to persymmetric matrices}\\
   {arXiv : 0909.0438v1, 33 pp}   
      \bibitem{Cherly} Cherly, Jorgen. \\ {On a conjecture regarding enumeration of n-times persymmetric matrices over $\mathbf{F}_2 $ by rank}\\
   {arXiv : 0909.4030, 21 pp}   
      \bibitem{Cherly} Cherly, Jorgen. \\ { On a conjecture concerning the fraction  of  invertible  m-times  Persymmetric  Matrices over $\mathbb{F}_{2} $}\\ 
    {arXiv : 1008.4048v1, 11 pp}  
    \bibitem{Cherly} Cherly, Jorgen. \\ {Enumeration of some particular n-times  Persymmetric  Matrices over $\mathbb{F}_{2} $ by rank}\\ 
    {arXiv : 1101.2097v1, 18 pp}  
       \bibitem{Cherly} Cherly, Jorgen. \\ {Enumeration of some particular quadruple Persymmetric  Matrices over $\mathbb{F}_{2} $ by rank}\\ 
    {arXiv : 1106.2691v1, 21 pp}  
       \bibitem{Cherly} Cherly, Jorgen. \\ {Enumeration of some particular quintuple Persymmetric  Matrices over $\mathbb{F}_{2} $ by rank}\\ 
    {arXiv : 1109.3623v1, 23 pp}   
        \bibitem{Cherly} Cherly, Jorgen. \\ {Enumeration of some particular 2Nx9 N-Times Persymmetric  Matrices over $\mathbb{F}_{2} $ by rank}\\ 
    {arXiv : 1204.3274v1, 16 pp}          
         \bibitem{Cherly} Cherly, Jorgen. \\ {Enumeration of some particular 2Nx10 N-Times Persymmetric  Matrices over $\mathbb{F}_{2} $ by rank}\\ 
    {arXiv : 1205.6056v1, 18 pp}          
     \end{thebibliography}
\end{document}